\newcommand{\N}{\mathbb{N}}     %% Non-negative integers
\newcommand{\Z}{\mathbb{Z}}     %% Integers
\newcommand{\C}{\mathbb{C}}     %% Complex numbers
\newcommand{\T}{\mathbb{T}}     %% Circle group
\newcommand{\K}{\mathbb{K}}     %% Compacts
\newcommand{\F}{\mathcal{F}}    %% Fixed point algebra
\newcommand{\X}{\mathsf{X}}     %% Shift space
\newcommand{\G}{\mathcal{G}}    %% Groupoid
\newcommand{\OO}{\mathcal{O}}   %% C*-algebra
\newcommand{\D}{\mathcal{D}}    %% Diagonal
\newcommand{\LRA}{\longrightarrow}
\let\geq\geqslant{}
\let\subset\subseteq{}          %% Inclusions
{}
\newtheorem{lemma}{Lemma}[section]      
\newtheorem{corollary}[lemma]{Corollary}
\newtheorem{theorem}[lemma]{Theorem}
\theoremstyle{definition}
\newtheorem{example}[lemma]{Example}
\newtheorem{remark}[lemma]{Remark}
\begin{document}

\begin{abstract}
A one-sided shift of finite type $(\X_A,\sigma_A)$ determines on the one hand a Cuntz--Krieger algebra $\OO_A$ with a distinguished abelian subalgebra $\D_A$ and a certain completely positive map $\tau_A$ on $\OO_A$. On the other hand, $(\X_A,\sigma_A)$ determines a groupoid $\G_A$ together with a certain homomorphism $\epsilon_A$ on $\G_A$. We show that this completely characterizes the one-sided conjugacy class of $\X_A$. This strengthens a result of J.~Cuntz and W.~Krieger. We also exhibit an example of two irreducible shifts of finite type which are eventually conjugate but not conjugate. 
This provides a negative answer to a question of K.~Matsumoto of whether eventual conjugacy implies conjugacy.
\end{abstract}

\title[Cuntz--Krieger algebras and one-sided conjugacy of shifts of finite type]{Cuntz--Krieger algebras and one-sided conjugacy of shifts of finite type and their groupoids}

\author{Kevin Aguyar Brix}
\address[K.A. Brix]{Department of Mathematical Sciences\\University of Copenhagen\\Universitetsparken 5\\DK-2100 Copenhagen\\Denmark}
\email{kab@math.ku.dk}

\author{Toke Meier Carlsen}
\address[T.M. Carlsen]{Department of Science and Technology\\University of the Faroe Islands\\
N\'oat\'un 3\\ FO-100 T\'orshavn\\the Faroe Islands}
\email{toke.carlsen@gmail.com}

\keywords{Shifts of finite type, groupoids, Cuntz--Krieger algebras}
\subjclass[2010]{primary 46L55, secondary 37A55, 37B10}
\thanks{The first named author is supported by the Danish National Research Foundation through the Centre for Symmetry and Deformation (DNRF92)}

\maketitle

%%%%%%%%%%%%%%%%%%%%%%%%%%%%%%%%%%%%%%%%%%%%%%%%%%%%%%%%%%%%%%%%%%%%%%%%%%
%% Main
%%%%%%%%%%%%%%%%%%%%%%%%%%%%%%%%%%%%%%%%%%%%%%%%%%%%%%%%%%%%%%%%%%%%%%%%%%

\section{Introduction}

In~\cite{CK80}, J.~Cuntz and W.~Krieger initiated what has turned out to be a very fascinating study of the symbiotic relationship between operator algebras and symbolic dynamics. 
Given a finite square $\{0,1\}$-matrix $A$ with no zero rows or zero columns, 
they construct a $C^*$-algebra $\OO_A$ which is now called the \emph{Cuntz--Krieger algebra} of $A$ with a distinguished abelian $C^*$-subalgebra $\D_A$ called the \emph{diagonal} subalgebra.
Under a certain condition (I) (this is later generalized to condition (L) of graphs), 
$\OO_A$ is a universal $C^*$-algebra and there is an action of the circle group $\T\curvearrowright \OO_A$ called the \emph{gauge action}.

The matrix $A$ also determines both a one-sided and a two-sided shift space of finite type (see, e.g.,~\cite{Kitchens},~\cite{LM}) denoted $(\X_A,\sigma_A)$ and $(\bar{\X}_A,\bar{\sigma}_A)$, respectively. 
In fact, any one-sided (resp., two-sided) shift of finite type is conjugate to $(\X_A,\sigma_A)$ (resp., $(\bar{\X}_A,\bar{\sigma}_A)$) for some finite square $\{0,1\}$-matrix $A$ with no zero rows or zero columns. The spectrum of the above mentioned abelian $C^*$-subalgebra $\D_A$ is homeomorphic to $\X_A$ in a natural way. 

In~\cite[Proposition 2.17]{CK80}, J.~Cuntz and W.~Krieger proved that the data consisting of the $C^*$-algebra $\OO_A$, the diagonal $\D_A$, the gauge action and the restriction to the diagonal of a certain completely positive map $\phi_A\colon \OO_A\LRA \OO_A$ is an invariant of the one-sided conjugacy class of $\X_A$ provided $A$ satisfies condition (I).

In~\cite{Cuntz81}, J.~Cuntz also showed the following two results under the hypothesis that the defining matrices as well as their transposes satisfy condition (I):
A flow equivalence between the two-sided shift spaces $(\bar{\X}_A,\bar{\sigma}_A)$ and $(\bar{\X}_B,\bar{\sigma}_B)$ implies the existence of a $*$-isomorphism $\OO_A\otimes \K \LRA \OO_B\otimes \K$ which sends $\D_A\otimes \mathcal{C}$ onto $\D_B\otimes \mathcal{C}$,
where $\mathcal{C}$ is the canonical maximal abelian subalgebra of the $C^*$-algebra $\K$ of compact operators on an infinite-dimensional separable Hilbert space;
a two-sided conjugacy between $(\bar{\X}_A,\bar{\sigma}_A)$ and $(\bar{\X}_B,\bar{\sigma}_B)$ implies the existence of a diagonal-preserving $*$-isomorphism between the stabilized $C^*$-algebras as above which also intertwines the gauge actions.
Both of these results were also present in~\cite{CK80} under the additional assumptions that the defining matrices are irreducible and aperiodic.

From a one-sided shift space $(\X_A,\sigma_A)$ one can construct an amenable locally compact Hausdorff étale groupoid $\G_A$ whose $C^*$-algebra $C^*(\G_A)$ is isomorphic to $\OO_A$ in a way that maps $\D_A$ onto $C(\G_A^{(0)})$. 
Using this groupoid, H.~Matui and K.~Matsumoto in~\cite{MM14} improved the work of J.~Cuntz and W.~Krieger when they showed that flow equivalence between shift spaces determined by irreducible and non-permutation $\{0,1\}$-matrices $A$ and $B$ is equivalent to the existence of a diagonal-preserving $*$-isomorphism between the stabilized Cuntz--Krieger algebras.
This result was later proved in~\cite{CEOR} to hold for any pair of finite square $\{0,1\}$-matrices $A$ and $B$ with no zero rows and no zero colums. 
In~\cite{CR}, the second-named author and J.~Rout used a similar approach to prove that $(\bar{\X}_A,\bar{\sigma}_A)$ and $(\bar{\X}_B,\bar{\sigma}_B)$ are conjugate 
(only assuming that $A$ and $B$ have no zero rows or zero columns) 
if and only if there exists a diagonal preserving $*$-isomorphism between the stabilized Cuntz--Krieger algebras which intertwines the gauge actions.
Thus, one can recover the two-sided shift space $(\bar{\X}_A,\bar{\sigma}_A)$ both up to flow equivalence and up to conjugacy from its Cuntz--Krieger algebra $\OO_A$.

In~\cite{Mat10}, K.~Matsumoto introduced the notion of continuous orbit equivalence of one-sided shift spaces of finite type and showed that for irreducible and non-permutation $\{0,1\}$-matrices $A$ and $B$, the one-sided shift spaces $(\X_A,\sigma_A)$ and $(\X_B,\sigma_B)$ are continuously orbit equivalent if and only if there is a diagonal preserving $*$-isomorphism (i.e., a $*$-isomorphism that maps $\D_A$ onto $\D_B$) between the Cuntz--Krieger algebras $\OO_A$ and $\OO_B$. Building on the reconstruction theory of J.~Renault in~\cite{Renault08}, H.~Matui and K.~Matsumoto observed in~\cite{MM14} that this is equivalent to isomorphism of the groupoids $\G_A$ and $\G_B$. These results were in~\cite{CEOR} shown to hold for any pair of finite square $\{0,1\}$-matrices $A$ and $B$ with no zero rows and no zero colums. 

Given irreducible and non-permutation $\{0,1\}$-matrices $A$ and $B$, K.~Matsumoto proved that the stronger notion of eventual conjugacy of one-sided shifts is completely characterized by diagonal-preserving $*$-isomorphism of the Cuntz--Krieger algebras which intertwines the gauge actions, see~\cite{Mat17cts}. 
This is generalized to any pair of finite square $\{0,1\}$-matrices $A$ and $B$ with no zero rows and no zero colums in~\cite{CR}. 
K.~Matsumoto then asks the question whether eventual conjugacy is equivalent to conjugacy. 
In the wake of the above mentioned results, the question can be rephrased as to whether 
diagonal preserving $*$-isomorphism of Cuntz--Krieger algebras intertwining the gauge actions actually characterizes conjugacy (see~\cite[Remark 3.6]{Mat17cts} or~\cite[p. 1139]{Mat17uni}).

In this paper, we address this question and the characterization of one-sided conjugacy of finite type shift spaces in relation to~\cite[Proposition 2.17]{CK80}.
Given any finite square $\{0,1\}$-matrix $A$ with no zero rows or zero columns, we introduce a continuous groupoid homomorphism $\epsilon_A\colon \G_A\LRA\G_A$ 
which induces a completely positive map $\tau_A\colon \OO_A\LRA\OO_A$.
This is different but related to the completely positive map $\phi_A\colon \OO_A\LRA\OO_A$ considered in~\cite{CK80}.
We show that for a pair of finite square $\{0,1\}$-matrices $A$ and $B$ with no zero rows and no zero colums, 
the one-sided shift spaces $(\X_A,\sigma_A)$ and $(\X_B,\sigma_B)$ are conjugate if and only if there is a diagonal preserving $*$-isomorphism 
$\Psi\colon \OO_A\LRA\OO_B$ that intertwines $\tau_A$ and $\tau_B$, 
if and only if there is a groupoid isomorphism $\Phi\colon \G_A\LRA\G_B$ that intertwines $\epsilon_A$ and $\epsilon_B$. 
We also show that these conditions are equivalent to the existence of a diagonal preserving $*$-isomorphism between $\OO_A$ and $\OO_B$ 
that intertwines both the gauge actions and $\phi_A|_{\D_A}$ and $\phi_B|_{\D_B}$, and thus show that~\cite[Proposition 2.17]{CK80} holds also for matrices that do not satisfy condition (I). 
Finally, we exhibit an example of two irreducible shifts of finite type which are eventually conjugate but not conjugate.
This shows that conjugacy is strictly stronger than eventual conjugacy and this answers K.~Matsumoto's question in the negative.

%%%%%%%%%%%%%%%%%%%%%%%%%%%%%%%%%%%%%%%%%%%%%%%%%%%%%%%%%%%%%%%%%%%%%%%%
\section{Notation and preliminaries}

In this section we briefly recall the definitions of the one-sided shift space $\X_A$, the groupoid $\G_A$, and the Cuntz--Krieger algebra $\OO_A$ together with the subalgebra $\D_A$ and the gauge action $\gamma\colon \T\curvearrowright \OO_A$.
We let $\Z$, $\N=\{0,1,2,\ldots\}$ and $\C$ denote the integers, the non-negative integers and the complex numbers, respectively.
Let $\T\subset \C$ be the unit circle group.

\subsection{One-sided shifts of finite type}
Let $N$ be a positive integer and let $A\in M_N(\{0,1\})$ be a matrix with no zero rows and no zero columns. The set
\[
    \X_A := \{ x = {(x_n)}_{n\in \N} \in {\{1,\ldots,N\}}^{\N} \mid A(x_n,x_{n+1}) = 1,~n\in \N\}
\]
is a second countable compact Hausdorff space in the subspace topology of $\{1,\ldots,N\}^{\N}$ (equipped with the product topology). 
Together with the shift operation $\sigma_A\colon \X_A\LRA \X_A$ given by ${(\sigma_A(x))}_n = x_{n+1}$ for $x\in \X_A$ and $n\in \N$, 
the pair $(\X_A,\sigma_A)$ is \emph{the one-sided shift of finite type} determined by $A$. 
In the literature (e.g.,~\cite{Cuntz81},~\cite{CK80},~\cite{Mat10},~\cite{MM14}), $(\X_A,\sigma_A)$ is often refered to as the \emph{one-sided topological Markov chain} determined by $A$. 
The reader is refered to~\cite{LM} for an excellent introduction to the general theory of shift spaces.

A finite string $\alpha = \alpha_0 \alpha_1\cdots \alpha_{n - 1}$ with $\alpha_i\in \{1,\ldots,N\}$ is an \emph{admissible word} (or just a \emph{word}) of length $|\alpha| = n$
if $A(\alpha_{i - 1}, \alpha_i) = 1$ for $i = 1, \ldots,n - 1$.
Equivalently, there exists $x = x_0 x_1 x_2\cdots\in \X_A$ and $j\in \N$ such that $\alpha = x_{[j,j + n)}$.
If $\alpha = x_{[i,j)}$ and $\beta = y_{[i',j')}$ are words, then $\alpha \beta y_{[j', \infty)}\in \X_A$ if and only if $A(x_{j-1},y_{i'}) = 1$
in which case the concatenation $\alpha \beta$ is again a word.
The topology of $\X_A$ has a basis consisting of compact open sets of the form
\[
    Z_{\alpha} = \{ x\in \X_A\mid x_{[0,|\alpha|)} = \alpha \} \subset \X_A
\]
where $\alpha$ is a word. Let $(\X_A,\sigma_A)$ and $(\X_B,\sigma_B)$ be one-sided shifts of finite type and let $h\colon \X_A\LRA \X_B$ be a homeomorphism. Then $h$ is a \emph{one-sided conjugacy} (or just a \emph{conjugacy}) if $h\circ \sigma_A = \sigma_B\circ h$.

\subsection{Groupoids}
Let $A$ be a finite square $\{0,1\}$-matrix with no zero rows and no zero columns. The \emph{Deaconu-Renault groupoid} \cite{De1995} associated to the one-sided shift of finite type $(\X_A,\sigma_A)$ is
\[
    \G_{A} := \{ (x,n,y)\in \X_A\times \Z\times \X_A\mid \exists k,l\in \N,~n=k-l\colon \sigma_A^k(x) = \sigma_A^l(y) \}
\]
with unit space $\G_A^{(0)} = \{(x,0,x)\in \G_A\mid x\in \X_A\}$. The range map is $r(x,n,y) = (x,0,x)$ and the source map is $s(x,n,y) = (y,0,y)$. The product $(x,n,y)(x',n',y')$ is well-defined if and only if $y=x'$ in which case it equals $(x,n+n',y')$ while inversion is given by ${(x,n,y)}^{-1} = (y,-n,x)$. We can specify a topology on $\G_A$ via a basis consisting of sets of the form
\[
    Z(U,k,l,V) := \{ (x,k-l,y) \in \G_A\mid x\in U, y\in V\},
\]
where $k,l\in \N$ and $U, V\subset \X_A$ are open such that $\sigma_A^k|_U$ and $\sigma_A^l|_V$ are injective and $\sigma_A^k(U) = \sigma_A^l(V)$. If $\alpha,\beta$ are words with the same final letter, we write
\[
    Z(\alpha,\beta) := Z(Z_{\alpha},|\alpha|,|\beta|,Z_{\beta}).
\]
With this topology, $\G_A$ is a second countable, \'etale (i.e., $s, r\colon \G_A\LRA \G_A$ are local homeomorphisms onto $\G_A^{(0)}$) and locally compact Hausdorff groupoid. 
Throughout the paper, we identify the unit space $\G_A^{(0)}$ of $\G_A$ with $\X_A$ via the map $(x,0,x)\mapsto x$. By, e.g.,~\cite[Lemma 3.5]{Sims-Williams}, $\G_A$ is amenable, so the reduced and the full groupoid $C^*$-algebras coincide. We shall refer to the groupoid homomorphism $c_A\colon \G_A\LRA \Z$ given by $c_A(x,n,y) = n$ as the \emph{canonical continuous cocycle}. The pre-image $c_A^{-1}(0) = \{(x,0,y)\in \G_A\mid x,y\in \X_A\}$ is a principal subgroupoid of $\G_A$.

\subsection{Cuntz--Krieger algebras}
Let $A$ be an $N\times N$ matrix with entries in $\{0,1\}$ and no zero rows and no zero columns. The \emph{Cuntz--Krieger algebra} \cite{CK80} $\OO_A$ is the universal unital $C^*$-algebra generated by partial isometries $s_1\ldots,s_N$ subject to the conditions
\begin{align*}
    s_i^*s_j = 0~(i\neq j), \qquad {s_i}^*s_i = \sum_{j=1}^{N} A(i,j) s_j{s_j}^*
\end{align*}
for every $i = 1,\ldots,N$. A word $\alpha = \alpha_1\cdots \alpha_{|\alpha|}$ defines a partial isometry $s_{\alpha} := s_{\alpha_1}\cdots s_{\alpha_{|\alpha|}}$ in $\OO_A$. The \emph{diagonal} subalgebra $\D_A$ is the abelian $C^*$-algebra  generated by the range projections of the partial isometries $s_{\alpha}$ inside $\OO_A$. The algebras $\D_A$ and $C(\X_A)$ are isomorphic via the correspondence $s_{\alpha}{s_{\alpha}}^* \longleftrightarrow \chi_{Z_{\alpha}}$, where $\chi_{Z_{\alpha}}$ is the indicator function on $Z_{\alpha}$. If $A$ is irreducible and not a permutation matrix, then $\OO_A$ is simple and $\D_A$ is maximal abelian in $\OO_A$; in fact, it is a Cartan subalgebra in the sense of~\cite{Renault08}. The \emph{gauge action} $\gamma^A\colon \T\curvearrowright \OO_A$ is determined by $\gamma_z^A(s_i) = zs_i$, for every $z\in \T$ and $i=1,\ldots,N$. The corresponding fixed point algebra is denoted $\F_A$.

The Cuntz--Krieger algebra is a groupoid $C^*$-algebra in the sense that there is a $*$-isomorphism $\OO_A\LRA C^*(\G_A)$ which sends the canonical generators $s_i$ to the indicator functions $\chi_i = \chi_{Z(i,r(i))}$, for $i=1,\ldots,N$ (see, e.g.,~\cite{ggcka}). 
This map takes $\D_A$ to $C(\G_A^{(0)})$ (the latter is identified with $C(\X_A)$) and $\F_A$ to $C^*(c_A^{-1}(0))$. 
The canonical cocycle $c_A\colon \G_A\LRA \Z$ defines an action $\gamma^{c_A}$ on $C_c(\G_A)$ as
\[
    \gamma^{c_A}_z(f)(x, n, y) = z^{c_A(x, n, y)} f(x, n, y) = z^n f(x, n, y),
\]
for $f\in C_c(\G_A)$ and $(x, n, y)\in \G_A$.
In particular, $\gamma^{c_A}_z(s_i) = zs_i$ for $i = 1,\ldots,N$ so $\gamma^{c_A}$ is the gauge action $\gamma^A$ restricted to $C_c(\G_A)$.

Throughout the paper we shall suppress this $*$-isomorphism and simply identify the algebras. The existence of a linear injection $j\colon C^*(\G_A) \LRA C_0(\G_A)$ allows us to think of elements in $\OO_A$ as functions on $\G_A$ vanishing at infinity, cf.~\cite{RenaultPHD} or~\cite{SimsNotes}. We shall do this whenever it be convenient. The inclusion $\iota\colon \G_A^{(0)}\LRA \G_A$ induces a conditional expectation $d_A\colon \OO_A\LRA \D_A$, see \cite[Remark 2.18]{CK80} and \cite[II, Proposition 4.8]{RenaultPHD}. In light of the above, we can think of $d_A(f)$ as the restriction of $f\in \OO_A\subseteq C_0(\G_A)$ to $\X_A$ where we identify $\X_A$ with $\G_A^{(0)}$.

%%%%%%%%%%%%%%%%%%%%%%%%%%%%%%%%%%%%%%%%%%%%%%%%%%%%%%%%%%%%%%%%%%%%%%%%%
\section{The results}

Let $A$ be an $N\times N$ matrix with entries in $\{0,1\}$ and no zero rows and no zero columns, and let $s_1\ldots,s_N$ be the canonical generators of $\OO_A$. In~\cite{CK80}, J.~Cuntz and W.~Krieger consider a completely positive map $\phi_A\colon \OO_A\LRA \OO_A$ given by
\[
    \phi_A(y) = \sum_{i=1}^N s_i y {s_i}^*,
\]
for $y\in \OO_A$. 
The map $\phi_A$ restricts to a $*$-homomorphism $\D_A\LRA \D_A$. 
Under the identification of $\D_A$ and $C(\X_A)$ we have the relation $\phi_A(f)(x) = f(\sigma_A(x))$ for $f\in \D_A$ and $x\in \X_A$, cf.~\cite[Proposition 2.5]{CK80}.

Put $s := \sum_{i=1}^N s_i$. In this paper, we shall also consider the completely positive map $\tau_A\colon \OO_A\LRA \OO_A$ defined by
\begin{equation}\label{eq:phi_A}
    \tau_A(y) := s y s^* = \sum_{i,j=1}^N s_j y {s_i}^*,
\end{equation}
for $y\in \OO_A$. Note that $\F_A$ is generated by $\bigcup_{k=0}^{\infty} \tau_A^k(\D_A)$ and $\tau_A(\F_A) \subset \F_A$. 
On the level of groupoids, we consider the homomorphism $\epsilon_A\colon \G_A\LRA \G_A$ given by
\[
    \epsilon_A(x,n,y) := (\sigma_A(x), n, \sigma_A(y)),
\]
for $(x,n,y)\in \G_A$. 
Suppose $(x_i, n_i, y_i)\LRA (x, n, y)$ in $\G_A$ as $i\LRA \infty$ and suppose $Z(\mu, \nu)$ is any basic open set containing $(\sigma_A(x), n, \sigma_A(y))$.
Then $n_i = n$ and $\sigma_A(x_i)\in Z_\mu$ and $\sigma_A(\nu)\in Z_\nu$ eventually by continuity of $\sigma_A$.
Hence $(\sigma_A(x_i), n_i, \sigma_A(y_i))\in Z(\mu, \nu)$ eventually, 
so $(\sigma_A(x_i), n_i, \sigma_A(y_i))\LRA (\sigma_A(x), n, \sigma_A(y))$ as $i\LRA \infty$ and $\varepsilon_A$ is continuous.

This induces a map $\epsilon_A^*\colon C_c(\G_A)\LRA C_c(\G_A)$ given by $\epsilon_A^*(f) = f\circ \epsilon_A$, for $f\in C_c(\G_A)$.

\begin{lemma}\label{lem:extend}
    The map $\tau_A\colon \OO_A\LRA \OO_A$ extends the map $\epsilon_A^*$ defined above and we have $d_A\circ \tau_A|_{\D_A} = \phi_A|_{\D_A}$.
\end{lemma}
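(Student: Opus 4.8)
The plan is to work entirely in the groupoid picture $\OO_A \cong C^*(\G_A)$, in which $s_\alpha s_\beta^* = \chi_{Z(\alpha,\beta)}$ and $\D_A = C(\X_A)$ sits inside $C_c(\G_A)$ as the functions supported on the clopen unit space $\G_A^{(0)} \cong \X_A$. Since both $\tau_A$ and $\epsilon_A^*$ are linear and the indicators $\chi_{Z(\alpha,\beta)}$ linearly span a dense subspace of $C_c(\G_A)$, the heart of the first statement is to verify $\tau_A(\chi_{Z(\alpha,\beta)}) = \epsilon_A^*(\chi_{Z(\alpha,\beta)})$ for every pair of words $\alpha,\beta$ with the same final letter, and then to promote this to all of $C_c(\G_A)$ by a density and continuity argument.

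For the generator computation I would first expand the left-hand side algebraically. Writing $s = \sum_k s_k$ and using the Cuntz--Krieger relations, one has $s_j s_\alpha = s_{j\alpha}$ when $A(j,\alpha_1)=1$ and $s_j s_\alpha = 0$ otherwise, the vanishing following from $(s_j s_{\alpha_1})^*(s_j s_{\alpha_1}) = A(j,\alpha_1)\, s_{\alpha_1}^* s_{\alpha_1}$, and similarly $s_\beta^* s_i^* = s_{i\beta}^*$ exactly when $A(i,\beta_1)=1$. Hence
\[
    \tau_A(s_\alpha s_\beta^*) = \sum_{i,j} s_j s_\alpha s_\beta^* s_i^* = \sum_{i,j} s_{j\alpha}\, s_{i\beta}^* = \sum_{i,j}\chi_{Z(j\alpha,\,i\beta)},
\]
the sum ranging over those $i,j$ for which the concatenations are legal words. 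For the right-hand side I would evaluate $\epsilon_A^*(\chi_{Z(\alpha,\beta)}) = \chi_{Z(\alpha,\beta)}\circ\epsilon_A$ pointwise: at $(x,n,y)\in\G_A$ this equals $1$ precisely when $n=|\alpha|-|\beta|$, $\sigma_A(x)\in Z_\alpha$, $\sigma_A(y)\in Z_\beta$ and $\sigma_A^{|\alpha|+1}(x)=\sigma_A^{|\beta|+1}(y)$. These are exactly the conditions singling out the unique contributing term $(i,j)=(y_0,x_0)$ in $\sum_{i,j}\chi_{Z(j\alpha,i\beta)}(x,n,y)$, so the two functions agree pointwise and the generator identity follows.

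To pass from the dense span to all of $C_c(\G_A)$ I would use that $\epsilon_A$ is proper (because $\sigma_A$ is a local homeomorphism of the compact space $\X_A$, hence proper, so that the preimage under $\epsilon_A$ of a compact set is compact), whence $\epsilon_A^*$ is well-defined and continuous for the inductive-limit topology; since inductive-limit convergence dominates $C^*$-norm convergence and $\tau_A$ is $C^*$-norm continuous, approximating an arbitrary $f\in C_c(\G_A)$ by finite linear combinations of the $\chi_{Z(\alpha,\beta)}$ and taking limits yields $\tau_A(f)=\epsilon_A^*(f)$. I expect this topological bookkeeping --- reconciling the inductive-limit and norm topologies --- to be the only genuinely delicate point; the algebra and the pointwise matching are routine once the index conventions are fixed.

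For the second statement I would restrict the first to $\D_A\subset C_c(\G_A)$. For $f\in\D_A$, viewed as a function supported on $\G_A^{(0)}$, the first part gives $\tau_A(f)=f\circ\epsilon_A$, and applying $d_A$ (restriction to the unit space) yields, for $x\in\X_A$,
\[
    (d_A\circ\tau_A)(f)(x) = (f\circ\epsilon_A)(x,0,x) = f\bigl(\sigma_A(x),0,\sigma_A(x)\bigr) = f(\sigma_A(x)),
\]
using $\epsilon_A(x,0,x)=(\sigma_A(x),0,\sigma_A(x))$ and that $f$ is supported on units. Since $\phi_A(f)(x)=f(\sigma_A(x))$ by the recalled identity, this is exactly $\phi_A|_{\D_A}$. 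Alternatively, one can verify this directly on the generators $\chi_{Z_\alpha}=s_\alpha s_\alpha^*$, where $d_A$ annihilates the off-diagonal terms $s_{j\alpha}s_{i\alpha}^*$ with $i\neq j$ and leaves $\sum_{j:A(j,\alpha_1)=1}\chi_{Z_{j\alpha}}$, matching $\phi_A(\chi_{Z_\alpha})$, and then invoke $C^*$-continuity of both sides on $\D_A$.
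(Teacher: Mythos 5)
Your argument is correct, but it takes a different route from the paper's. The paper proves the identity $\tau_A(f)=\epsilon_A^*(f)$ for \emph{every} $f\in C_c(\G_A)$ at once, by a single pointwise convolution computation: $\sum_{i,j}\chi_j\star f\star\chi_i^*$ evaluated at $(x,n,y)$ collapses (the $i$-sum picks out $i=y_0$ and the $j$-sum picks out $j=x_0$) to $f(\sigma_A(x),n,\sigma_A(y))$, so no density or continuity argument is needed; the second assertion is then the same computation restricted to the unit space. You instead verify the identity only on the spanning elements $s_\alpha s_\beta^*=\chi_{Z(\alpha,\beta)}$ via the Cuntz--Krieger relations, and then must promote it to all of $C_c(\G_A)$ by the inductive-limit/norm-topology argument, using properness of $\epsilon_A$ and density of the span of the $\chi_{Z(\alpha,\beta)}$. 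Both steps of yours are sound (your identification of the unique contributing pair $(i,j)=(y_0,x_0)$ is exactly the same cancellation the paper exploits, just expressed algebraically), but the topological bookkeeping you correctly flag as the delicate point is precisely what the paper's formulation avoids: by working with an arbitrary $f\in C_c(\G_A)$ and the convolution formula directly, the generator-plus-density detour becomes unnecessary. What your approach buys is independence from the function model on the computational step --- the generator identity $\tau_A(s_\alpha s_\beta^*)=\sum_{i,j}s_{j\alpha}s_{i\beta}^*$ is pure $C^*$-algebra --- at the cost of having to justify that the locally constant functions are inductive-limit dense and that $\epsilon_A^*$ is continuous there; your treatment of the second assertion, evaluating $f\circ\epsilon_A$ at $(x,0,x)$ and invoking $\phi_A(f)(x)=f(\sigma_A(x))$, is essentially what the paper means by ``a computation similar to the above.''
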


\begin{proof}
    The generators $s_i$ in $\OO_A$ correspond to the indicator functions $\chi_{i}$ in $C_c(\G_A)$. Inside the convolution algebra we thus have
    \begin{align*}
        \sum_{i,j=1}^N \chi_j\star (f\star {\chi_i}^*)(x,n,y) 
        &= \sum_{j=1}^N \sum_{(z,m,y)\in (\G_A)} \chi_j (x,n-m,z) f(z,m+1,\sigma_A (y)) \\
        &= f(\sigma_A(x),n,\sigma_A(y)),
    \end{align*}
    for $f\in C_c(\G_A)$ and $(x,n,y)\in \G_A$. Here, $\star$ denotes the convolution product in $C_c(\G_A)$. The maps $\epsilon_A^*$ and $\tau_A$ therefore agree on $C_c(\G_A)$. A computation similar to the above shows that $d_A(\tau_A(f)) = \phi_A(f)$, for $f\in \D_A$.
\end{proof}

For the proof of Theorem~\ref{thm}, we need the following lemma which might be of interest on its own.

\begin{lemma}\label{lem:phi-flet}
	Let $A$ and $B$ be finite square $\{0,1\}$-matrices with no zero rows and no zero columns, and let $\Psi\colon \F_A\LRA \F_B$ be a $*$-isomorphism such that $\Psi(\D_A) = \D_B$. Then $\Psi(d_A(f))=d_B(\Psi(f))$ for all $f\in\F_A$. If, in addition, $\Psi\circ \tau_A|_{\F_A} = \tau_B\circ \Psi$, then $\Psi\circ\phi_A|_{\D_A}=\phi_B\circ\Psi|_{{}\D_A}$.
\end{lemma}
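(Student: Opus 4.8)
The plan is to prove the two assertions in sequence, exploiting the fact that $d_A$ is the canonical conditional expectation onto $\D_A$ induced by the inclusion $\iota\colon\G_A^{(0)}\LRA\G_A$, and that this expectation is determined intrinsically by the pair $(\F_A,\D_A)$ rather than by any choice of generators. First I would establish that $d_A\colon\F_A\LRA\D_A$ is the \emph{unique} conditional expectation that is faithful and compatible with the Cartan structure; more concretely, since $\D_A$ is identified with $C(\X_A)=C(\G_A^{(0)})$ and $d_A(f)$ is the restriction of $f\in\F_A\subseteq C_0(\G_A)$ to the unit space, $d_A$ can be characterised purely in terms of the inclusion $\D_A\subseteq\F_A$ (for instance as the expectation whose kernel is spanned by elements supported off the unit space, or via the state-extension property that makes $\D_A$ a Cartan subalgebra). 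The key point is that this characterisation is preserved by any $*$-isomorphism $\Psi$ satisfying $\Psi(\D_A)=\D_B$: such a $\Psi$ must carry the canonical expectation $d_A$ to the canonical expectation $d_B$, so $\Psi\circ d_A = d_B\circ\Psi$ on $\F_A$, which is precisely the first claim.

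The main obstacle I anticipate is making the uniqueness of $d_A$ rigorous without circularity. The cleanest route is to invoke the groupoid picture: under the identification $\F_A=C^*(c_A^{-1}(0))$ with $\D_A=C(\G_A^{(0)})$ sitting inside as the functions supported on the unit space, $d_A$ is the restriction-to-units map, and its defining property is that it is the unique $\D_A$-bimodule projection. Since $c_A^{-1}(0)$ is a principal groupoid (stated in the excerpt), $\D_A$ is maximal abelian in $\F_A$ and the expectation onto it is automatically unique; this is the standard fact that for a principal étale groupoid the restriction map is the only conditional expectation onto $C(\G^{(0)})$. A $*$-isomorphism $\Psi$ with $\Psi(\D_A)=\D_B$ therefore intertwines $d_A$ and $d_B$ by uniqueness, since $d_B\circ\Psi$ is a conditional expectation $\F_A\to\D_B$ and $\Psi\circ d_A\circ\Psi^{-1}$ is a conditional expectation $\F_B\to\D_B$, both necessarily equal to the canonical ones.

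For the second assertion, I would simply combine the first part with Lemma~\ref{lem:extend} and the hypothesis $\Psi\circ\tau_A|_{\F_A}=\tau_B\circ\Psi$. By Lemma~\ref{lem:extend} we have $\phi_A|_{\D_A}=d_A\circ\tau_A|_{\D_A}$ and likewise $\phi_B|_{\D_B}=d_B\circ\tau_B|_{\D_B}$. Now for $f\in\D_A$, compute
\begin{align*}
    \Psi\bigl(\phi_A(f)\bigr)
    &= \Psi\bigl(d_A(\tau_A(f))\bigr)
    = d_B\bigl(\Psi(\tau_A(f))\bigr)
    = d_B\bigl(\tau_B(\Psi(f))\bigr)
    = \phi_B\bigl(\Psi(f)\bigr),
\end{align*}
where the second equality uses the intertwining of the expectations just proved (applied to $\tau_A(f)\in\F_A$, which requires only that $\tau_A$ maps $\D_A$, indeed $\F_A$, into $\F_A$ as noted in the excerpt), and the third equality is the hypothesis $\Psi\circ\tau_A|_{\F_A}=\tau_B\circ\Psi$. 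This yields $\Psi\circ\phi_A|_{\D_A}=\phi_B\circ\Psi|_{\D_A}$, as required.

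The only subtlety to verify carefully is that $\tau_A(f)$ indeed lands in $\F_A$ so that the first-part intertwining applies: this follows because $f\in\D_A\subseteq\F_A$ and $\tau_A(\F_A)\subseteq\F_A$, both recorded in the excerpt. Thus the entire argument reduces to the uniqueness-of-expectation fact in the first paragraph, which I expect to be the genuine content of the lemma, with the second statement being a formal consequence.
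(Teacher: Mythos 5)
Your proof is correct, but for the first assertion you take a genuinely different route from the paper. The paper deduces $\Psi\circ d_A=d_B\circ\Psi$ from Renault's reconstruction theorem: since $c_A^{-1}(0)$ and $c_B^{-1}(0)$ are principal, the diagonal-preserving isomorphism $\Psi$ is implemented by a groupoid isomorphism $\kappa\colon c_B^{-1}(0)\to c_A^{-1}(0)$ twisted by a $\T$-valued cocycle $\xi$, and since $\kappa$ maps units to units and $\xi$ is trivial on units, the intertwining of the restriction maps $d_A$, $d_B$ can simply be read off. You instead argue by uniqueness of the conditional expectation: $\Psi\circ d_A\circ\Psi^{-1}$ is a conditional expectation of $\F_B$ onto $\D_B$, hence equals $d_B$. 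This is lighter machinery and avoids the Weyl-groupoid apparatus entirely; the one point you should nail down is the uniqueness claim itself, since conditional expectations onto maximal abelian subalgebras are not unique ``for free'' in complete generality. Here it does hold, and the clean justification is the one you gesture at: because $c_A^{-1}(0)$ is principal (and amenable, so the full and reduced $C^*$-algebras agree), every pure state of $C(\G_A^{(0)})$ extends uniquely to a state of $C^*(c_A^{-1}(0))$, and composing two putative expectations with point evaluations then forces them to coincide. (Also, your phrase ``$d_B\circ\Psi$ is a conditional expectation $\F_A\to\D_B$'' is loose -- that map is not an expectation in the usual sense of a projection onto a subalgebra -- but your second formulation $\Psi\circ d_A\circ\Psi^{-1}$ is the right one and suffices.) For the second assertion your computation $\Psi(\phi_A(f))=\Psi(d_A(\tau_A(f)))=d_B(\tau_B(\Psi(f)))=\phi_B(\Psi(f))$ is exactly the paper's, including the observation that $\tau_A(f)\in\F_A$ so that the first part applies.
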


\begin{proof}
    The groupoids $c_A^{-1}(0)$ and $c_B^{-1}(0)$ are principal. By~\cite[Proposition 4.13]{Renault08} (see also \cite[Theorem 3.3]{CRST}) and \cite[Proposition 5.7]{Ma2012a} and its proof, there is a groupoid isomorphism $\kappa\colon c_B^{-1}(0)\LRA c_A^{-1}(0)$ and a groupoid homomorphism $\xi\colon c_{A}^{-1}(0)\LRA \T$ such that
    \[
        \Psi(f)(\eta) = \xi(\kappa(\eta)) f(\kappa(\eta)),
    \]
    for $f\in \F_A$ and $\eta\in c_{B}^{-1}(0)$. In particular, $\Psi(f)(x) = f(\kappa(x))$ for $x\in \X_B$ when we identify $x\in \X_B$ with $(x,0,x)\in\G_B^{(0)}$. For $x\in \X_B$, we then see that
    \[
        \Psi(f)(x) = f(\kappa(x)) = f|_{\X_A} (\kappa(x)) = \Psi(f|_{\X_A})(x),
    \]
    that is, $\Psi\circ d_A = d_B\circ \Psi$. If, in addition, $\Psi\circ \tau_A|_{\F_A} = \tau_B\circ \Psi$, then
    \[
        \Psi(\phi_A(f)) = \Psi(d_A(\tau_A(f))) = d_B(\tau_B(\Psi(f))) = \phi_B(\Psi(f)),
    \]
    for $f\in \D_A$.
\end{proof}

We now arrive at our main theorem. If $A$ and $B$ are finite square $\{0,1\}$-matrices with no zero rows and no zero columns, then any isomorphism $\Phi\colon \G_{A}\LRA \G_B$ restricts to a homeomorphism from $\G_A^{(0)}$ to $\G_B^{(0)}$ and thus induces a homeomorphism from $\X_A$ to $\X_B$ via the identification of $\X_A$ with $\G_A^{(0)}$ and the identification of $\X_B$ with $\G_B^{(0)}$. We denote the latter homeomorphism by $\Phi^{(0)}$.

\begin{theorem}\label{thm}
    Let $A$ and $B$ be finite square $\{0,1\}$-matrices with no zero rows and no zero columns, and let $h\colon \X_A\LRA \X_B$ be a homeomorphism. The following are equivalent.
    \begin{enumerate}
        \item[(i)] The homeomorphism $h\colon \X_A\LRA \X_B$ is a conjugacy.
        \item[(ii)] There is a groupoid isomorphism $\Phi\colon \G_{A}\LRA \G_B$ satisfying $c_B\circ\Phi=c_A$, $\Phi^{(0)}=h$, and 
            \begin{equation}\label{eq:groupoid_rel}
                 \Phi\circ \epsilon_A = \epsilon_B\circ \Phi.
            \end{equation}
        \item[(iii)] There is a groupoid isomorphism $\Phi\colon \G_{A}\LRA \G_B$ satisfying $\Phi^{(0)}=h$ and \eqref{eq:groupoid_rel}.
        \item[(iv)] There is a $*$-isomorphism $\Psi\colon \OO_A\LRA \OO_B$ satisfying 
            $\Psi\circ\gamma_z^A=\gamma_z^B\circ\Psi$ for all $z\in \T$, 
            $\Psi\circ d_A=d_B\circ\Psi$, 
            $\Psi(f)=f\circ h^{-1}$ for all $f\in\D_A$, 
            $\Psi\circ\phi_A|_{\D_A}=\phi_B\circ\Psi|_{\D_A}$ and
            \begin{equation}\label{eq:algebra_rel}
                \Psi\circ \tau_A = \tau_B\circ \Psi.
            \end{equation}
        \item[(v)] There is a $*$-isomorphism $\Psi\colon \OO_A\LRA \OO_B$ satisfying $\Psi(\D_A) = \D_B$, 
            $\Psi(f)=f\circ h^{-1}$ for all $f\in\D_A$, and~\eqref{eq:algebra_rel}.
        \item[(vi)] There is a $*$-isomorphism $\Theta\colon\D_A\to\D_B$ satisfying $\Theta(f)=f\circ h^{-1}$ for all $f\in\D_A$, and $\Theta\circ\phi_A|_{\D_A}=\phi_B\circ\Theta$.
    \end{enumerate}   
\end{theorem}

\begin{remark}
As we shall see in the proof, 
if $h\colon \X_A\LRA \X_B$ is a conjugacy, 
then the groupoid isomorphisms $\Phi\colon \G_A\to\G_B$ in (ii) and (iii) can be chosen such that $\Phi((x,n,y)) = (h(x),n,h(y))$ for $(x,n,y)\in\G_A$. 
Also, if $\Phi\colon \G_A\to\G_B$ is a groupoid isomorphism as in (iii) (or (ii)), 
then the $*$-isomorphisms $\Psi\colon \OO_A\LRA \OO_B$ in (iv) and (v) can be chosen to satisfy $\Psi(y)(\eta) = y(\Phi^{-1}(\eta))$ for $y\in\OO_A$ and $\eta\in\G_B$.
\end{remark}

\begin{proof}
    (i) $\implies$ (ii): 
    If $h\colon \X_A\LRA X_B$ is a conjugacy we can define a groupoid homomorphism $\Phi\colon \G_A\LRA \G_B$ by $\Phi((x,n,y)) = (h(x),n,h(y))$ for each $(x,n,y)\in \G_A$. 
    It is clear that $\Phi$ is a bijective algebraic homomorphism.
    In order to see that $\Phi$ is continuous, 
    suppose $(x_i, n_i, y_i) \LRA (x, n, y)$ in $\G_A$ as $i\LRA \infty$ and pick $Z(\mu, \nu)\subset \G_B$ containing $(h(x), n, h(y))$.
    Note that $n_i$  eventually equals $n$.
    As $h$ is continuous, we have $h(x_i)\in Z(\mu)$ and $h(y_i)\in Z(\nu)$ eventually, hence $h(x_i, n, y_i)\in Z(\mu, \nu)$ eventually,
    so $h(x_i, n, y_i)\LRA h(x, n, y)$ as $i\LRA \infty$.
    The argument for $\Phi^{-1}$ is symmetric, so $\Phi$ is a groupoid isomorphism which satisfies $c_B\circ\Phi=c_A$ and $\Phi^{(0)}=h$.
    As $h$ is a conjugacy, $\Phi$ also satisfies~\eqref{eq:groupoid_rel}.

    The implications (ii) $\implies$ (iii) and (iv) $\implies$ (v) are obvious. 

    (iii) $\implies$ (v) and (ii) $\implies$ (iv): 
    A groupoid isomorphism $\Phi\colon \G_A\LRA \G_B$ with $\Phi^{(0)} = h$ induces a $*$-isomorphism $\Psi\colon \OO_A\LRA \OO_B$ with $\Psi\circ d_A = d_B\circ \Psi$. 
    In particular, $\Psi(\D_A) = \D_B$ and $\Psi(f) = f\circ h^{-1}$, for $f\in \D_A$. Since $\Phi$ satisfies~\eqref{eq:groupoid_rel}, we also have $\Psi\circ \tau_A = \tau_B\circ \Psi$. This is (v). 
    If, in addition, $\Phi$ satisfies $c_B \circ \Phi = c_A$, then $\Psi(s_i) = 1_{\Phi(Z(i, r(i)))}\in c_B^{-1}(\{0\})$ so
    \[
        \Psi(\gamma^A_z(s_i)) = z \Psi(1_{\Phi(Z(i, r(i)))}) = \gamma^B_z(\Psi(s_i)),
    \]
    for $i=1,\ldots,N$. It follows that $\Psi\circ \gamma_z^A = \gamma_z^B\circ \Psi$, for every $z\in \T$. 
    In particular, this implies that $\Psi(\F_A) = \F_B$. By Lemma~\ref{lem:phi-flet}, it follows that $\Psi\circ \phi_A|_{\D_A} = \phi_B\circ \Psi|_{\D_A}$. This is (iv).
        
    (v) $\implies$ (vi): If $\Psi$ preserves the diagonal and satisfies~\eqref{eq:algebra_rel}, then
    \[
        \Psi\big(\bigcup_{k = 0}^\infty \tau_A^k (\D_A) \big) = \bigcup_{k = 0}^{\infty} \tau_B^k(\D_B). 
    \]
    As $\F_A$ is generated by $\bigcup_{k = 0}^{\infty} \tau_A^k(\D_A)$ as a $\mathrm C^*$-algebra, it follows that $\Psi(\F_A) = \F_B$.
    Lemma~\ref{lem:phi-flet} then entails that $\Psi\circ \phi_A|_{\D_A} = \phi_B\circ \Psi|_{\D_A}$.

    (vi) $\implies$ (i): The relation $\Theta\circ \phi_A|_{\D_A} = \phi_B\circ \Theta$ and the fact that $\phi_A(f)(x) = f(\sigma_A(x))$ for $f\in \D_A$ and $x\in \X_A$
    ensures that $h$ is a conjugacy by Gelfand duality.
\end{proof}

\begin{corollary}
    Let $A$ and $B$ be finite square $\{0,1\}$-matrices with no zero rows and no zero columns. The following are equivalent.
    \begin{enumerate}
        \item[(i)] The one-sided shifts $(\X_A,\sigma_A)$ and $(\X_B,\sigma_B)$ are one-sided conjugate.
        \item[(ii)] There is a groupoid isomorphism $\Phi\colon \G_A\LRA \G_B$ satisfying $\Phi\circ \epsilon_A = \epsilon_B\circ \Phi$.
        \item[(iii)] There is a $*$-isomorphism $\Psi\colon \OO_A\LRA \OO_B$ satisfying $\Psi(\D_A) = \D_B$ and $\Psi\circ \tau_A = \tau_B\circ \Psi$.
    \end{enumerate}
\end{corollary}

One-sided shifts $(\X_A,\sigma_A)$ and $(\X_B,\sigma_B)$ are \emph{eventually conjugate} if there exist a homeomorphism $h\colon \X_A\LRA \X_B$ and $L\in \N$ such that 
\[
    \sigma_B^L(h(\sigma_A(x)) = \sigma_B^{L+1}(h(x)), \qquad 
    \sigma_A^L(h^{-1}(\sigma_B(y)) = \sigma_A^{L+1}(h^{-1}(y)),
\]
for $x\in \X_A$ and $y\in \X_B$. 
The above theorem should be compared to~\cite[Theorem 3.5]{Mat17cts} and~\cite[Corollary 4.2]{CR} which characterize one-sided eventual conjugacy. 

\begin{example}
Consider the following two graphs.

\begin{center}
\begin{tikzpicture}[scale=1.8, ->-/.style={thick, decoration={markings, mark=at position 0.6 with {\arrow{Straight Barb[line width=0pt 1.5]}}},postaction={decorate}}, vertex/.style={inner sep=0pt, circle, fill=black}]
\node at (-0.5,0) {$E:$};
\node at (3.5,0) {$F:$};
\node (E1) at (0,0) [vertex] {.};
\node (E2) at (1,0) [vertex] {.};
\node (E3) at (2,0) [vertex] {.};
\node (F1) at (4,0) [vertex] {.};
\node (F2) at (5,0) [vertex] {.};
\node (F3) at (6,0) [vertex] {.};
\draw[->-, bend right=45] (E1) to node[below] {$a$} (E2);
\draw[->-, bend right=30] (E2) to node[below] {$c$} (E1);
\draw[->-, bend right=70] (E2) to node[above] {$d$} (E1);
\draw[->-, bend left=45] (E3) to node[below] {$b$} (E2);
\draw[->-, bend left=30] (E2) to node[below] {$e$} (E3);
\draw[->-, bend left=70] (E2) to node[above] {$f$} (E3);
\draw[->-, bend right=45] (F1) to node[below] {$a'$} (F2);
\draw[->-, bend right=30] (F2) to node[below] {$c'$} (F1);
\draw[->-, bend right=70] (F2) to node[above] {$d'$} (F1);
\draw[->-, bend left=45] (F3) to node[below] {$b'$} (F2);
\draw[->-, bend right=80, looseness=2] (F2) to node[above] {$e'$} (F1);
\draw[->-, bend left=70] (F2) to node[above] {$f'$} (F3);
\end{tikzpicture}
\end{center}

Let $\X_E$ be the one-sided edge shift of $E$ and let $\X_F$ be the one-sided edge shift of $F$. Define a homeomorphism $h\colon \X_E\LRA \X_F$ by sending ${(x_n)}_{n\geq 0}\in \X_E$ to ${(y_n)}_{n\geq 0}\in \X_F$ where
\[
    y_n = 
    \begin{cases}
        a', & \textrm{if $n>0$ and $x_{n-1}=e$}, \\
        (x_n)' & \textrm{otherwise}.
    \end{cases}
\]
E.g., $h(e b c a f\ldots) = e' a' c' a' f'\ldots$ while $h(\sigma_E(e b c a f\ldots)) = h(b c a f\ldots) = b' c' a' f'\ldots$
and $h^{-1}(a' e' a' f' \ldots) = a e b f \ldots$.
Observe that
\[
    \sigma_{F}^2(h(x)) = \sigma_{F}(h(\sigma_{E}(x))), \qquad \sigma_{E}^2(h^{-1}(y)) = \sigma_{E}(h^{-1}(\sigma_{F}(y))),
\]
for $x\in \X_E$ and $y\in \X_F$. Hence $\X_E$ and $\X_F$ are eventually conjugate.
On the other hand, the total amalgamation of $E$ is the graph
\begin{center}
\begin{tikzpicture}[scale=2, ->-/.style={thick, decoration={markings, mark=at position 0.6 with {\arrow{Straight Barb[line width=0pt 1.5]}}},postaction={decorate}}, vertex/.style={inner sep=0pt, circle, fill=black}]
\node (G1) at (0,0) [vertex] {.};
\node (G2) at (1,0) [vertex] {.};
\draw[->-, bend right=30] (G1) to (G2);
\draw[->-, bend right=70] (G1) to (G2);
\draw[->-, bend right=30] (G2) to (G1);
\draw[->-, bend right=70] (G2) to (G1);
\end{tikzpicture}
\end{center}
while the total amalgamation of $F$ is $F$ itself. It thus follows that $\X_E$ and $\X_F$ are \emph{not} conjugate, cf.\ e.g.,~\cite[Section 13.8]{LM} or~\cite[Theorem 2.1.10]{Kitchens}.
\end{example}

\end{document}